\newtheorem{thm}{Theorem}[section]
\newtheorem{lem}[thm]{Lemma}
\def\BibTeX{{\rm B\kern-.05em{\sc i\kern-.025em b}\kern-.08em
    T\kern-.1667em\lower.7ex\hbox{E}\kern-.125emX}}
\begin{document}

\title{Wasserstein Distributionally Robust Regret-Optimal Control under Partial Observability\\
{\footnotesize \textsuperscript{}}
\thanks{The authors are affiliated with the Department of Electrical Engineering at Caltech. Emails: \{jhajar,tkargin,hassibi\}@caltech.edu.}}

\author{\IEEEauthorblockN{Joudi Hajar}

\and
\IEEEauthorblockN{Taylan Kargin}

\and
\IEEEauthorblockN{Babak Hassibi}

}
\maketitle
\thispagestyle{plain}
\pagestyle{plain}
\begin{abstract}
This paper presents a framework for Wasserstein distributionally robust (DR) regret-optimal (RO) control in the context of partially observable systems. DR-RO control considers the regret in LQR cost between a causal and non-causal controller and aims to minimize the worst-case regret over all disturbances whose probability distribution is within a certain Wasserstein-2 ball of a nominal distribution. Our work builds upon the full-information DR-RO problem that was introduced and solved in Yan et al., 2023 \cite{DRORO}, and extends it to handle partial observability and measurement-feedback (MF). We solve the finite horizon partially observable DR-RO and show that it reduces to a tractable semi-definite program whose size is proportional to the time horizon.  Through simulations, the effectiveness and performance of the framework are demonstrated, showcasing its practical relevance to real-world control systems. The proposed approach enables robust control decisions, enhances system performance in uncertain and partially observable environments, and provides resilience against measurement noise and model discrepancies.
\end{abstract}

\begin{IEEEkeywords}
regret-optimal control, Wasserstein distance, partial observability, distributionally robust control
\end{IEEEkeywords}

\section{Introduction}

Regret-optimal control~\cite{sabag2021regret,goel2023regret,didier2022system,martin2022safe,DRORO}, is a new approach in control theory that focuses on minimizing the regret associated with control actions in uncertain systems. The regret measures the cumulative difference between the performance achieved by a causal control policy and the performance achieved by an optimal policy that could have been chosen in hindsight. In regret-optimal control, the worst-case regret over all $\ell_2$-norm-bounded disturbance sequences is minimized. 

Distributionally robust control~\cite{yang2020wasserstein,tacskesen2023distributionally,zhong2023nonlinear,DRORO}, on the other hand, addresses uncertainty in system dynamics and disturbances by considering a set of plausible probability distributions rather than relying on a single distribution as in LQG control, or on a worst-case disturbance, such as in $H_\infty$ or RO control. This approach seeks to find control policies that perform well across all possible distributions within the uncertainty set, thereby providing robustness against model uncertainties and ensuring system performance in various scenarios. The size of the uncertainty set allows one to control the amount of desired robustness so that, unlike $H_\infty$ controllers, say, the controller is not overly conservative. The uncertainty set is most often taken to be the set of disturbances whose distributions are within a given Wasserstein-2 distance of the nominal disturbance distribution. The reason is that, for quadratic costs, the supremum of the expected cost over a Wasserstein ball reduces to a tractable semi-definite program (SDP). 

%The current paper considers and extends the framework introduced in \cite{DRORO} that applied distributionally robust (DR) control to the regret-optimal (RO) setting. In the full-information finite-horizon setting, the authors of \cite{DRORO} reduce the DR-RO problem to a tractable SDP. In this paper, we extend the results of \cite{DRORO} to partially observable systems where, unlike the full-information setting, the controller does not have access to the system state. Instead, it only has access to partial information obtained through noisy measurements. As mentioned earlier, this is often called the measurement feedback problem.

%Of course, the solution to the measurement feedback problem in LQG and $H_\infty$ control is classical. The measurement-feedback setting for DR control has been studied in ~\cite{tacskesen2023distributionally},~\cite{hakobyan2022wasserstein}, and for RO control in ~\cite{ROMF}. 

%In the finite-horizon case, our solution reduces to a tractable semi-definite program. Through simulations, we validate the effectiveness and performance of our approach, showcasing its applicability in real-world control systems.

The current paper considers and extends the framework introduced in \cite{DRORO} that applied distributionally robust (DR) control to the regret-optimal (RO) setting. In the full-information finite-horizon setting, the authors of \cite{DRORO} reduce the DR-RO problem to a tractable SDP. In this paper, we extend the results of \cite{DRORO} to partially observable systems where, unlike the full-information setting, the controller does not have access to the system state. Instead, it only has access to partial information obtained through noisy measurements. This is often called the measurement feedback (MF) problem. Of course, the solution to the measurement feedback problem in LQG and $H_\infty$ control is classical. The measurement-feedback setting for DR control has been studied in ~\cite{tacskesen2023distributionally},~\cite{hakobyan2022wasserstein}, and for RO control in 
~\cite{ROMF}.  

In the finite-horizon case, we reduce the DR-RO control problem with measurement feedback to an SDP similar to the full-information case studied in \cite{DRORO}. Furthermore,  we validate the effectiveness and performance of our approach through simulations, showcasing its applicability in real-world control systems.

%In this paper, we combine the concepts of regret-optimal control, distributionally robust control, and measurement feedback to address the challenges of uncertainty and partial observability, simultaneously.
%By integrating these approaches, we develop a framework that enables robust control decisions even in the presence of limited observability. 
%Our approach extends the distributionally robust approach to regret-optimal control using the Wasserstein-2 distance~\cite{DRORO} which provides robust decision-making capabilities by quantifying and minimizing the regret associated with control actions. 

%In the finite-horizon case, our solution reduces to a tractable semi-definite program. Through simulations, we validate the effectiveness and performance of our approach, showcasing its applicability in real-world control systems.

The organization of the paper is as follows. In section~\ref{sec::prelim}, we review the LQG and regret optimal control formulation in the measurement-feedback setting. In section~\ref{sec::pb}, we present the distributionally robust regret-optimal with measurement feedback (DR-RO-MF) problem formulation,  in section~\ref{sec::cvx} we reformulate the problem as a tractable SDP, and in section~\ref{sec::exp} we show numerical results for controlling the flight of a Boeing 747~\cite{boeing}.

\section{Preliminaries}\label{sec::prelim}
\subsection{Notations}
$\mathbb{R}$ denotes the set of real numbers, $\mathbb{N}$ is the set of natural numbers, $\| \cdot\|$ is the 2-norm, $\mathbb{E}_{(\cdot)}$ is the expectation over $(\cdot)$, $\mathcal{M}(\cdot)$ is the set of probability distributions over $(\cdot)$ and $\operatorname{Tr}$ denotes the trace.
\subsection{A Linear Dynamical System}
We consider the following state-space model of a discrete-time, linear time-invariant (LTI) dynamical system:
\begin{equation} \label{eq::ss} 
\begin{aligned}
        x_{t+1}&=Ax_t+Bu_t+w_t, \\
    y_t&=Cx_t+v_t. 
\end{aligned}
\end{equation}
Here, $x_t\in \mathbb{R}^n$ represents the state of the system, $u_t\in \mathbb{R}^m$ is the control input, $w_t \in \mathbb{R}^n$ is the process noise, while $y_t \in \mathbb{R}^p$ represents the noisy state measurements that the controller has access to, and $v_t \in \mathbb{R}^p$ is the measurement noise.  The sequences \{$w_i$\} and \{$v_i$\} are considered to be randomly distributed according to an unknown joint probability measure $P$ which lies in a specified compact ambiguity set, $\cal P$. For simplicity, we take $x_0$ to be zero.

In the rest of this paper, we adopt an operator form representation of the system dynamics~\eqref{eq::ss}. To this end, assume a horizon of $N\in \mathbb{N} $, and let us define 
\[ x \coloneqq \left[ \begin{array}{c} x_0 \\ x_1 \\ \vdots \\ x_{N-1} \end{array} \right] \in {\mathbb R}^{Nn}~~~,~~~
u \coloneqq \left[ \begin{array}{c} u_0 \\ u_1 \\ \vdots \\ u_{N-1} \end{array} \right] \in {\mathbb R}^{Nm}\]
and similarly for $y\in{\mathbb R}^{Np}$, $w\in{\mathbb R}^{Nn}$, and $v\in{\mathbb R}^{Np}$. Using these definitions, we can represent the system dynamics~\eqref{eq::ss} equivalently in operator form as
\begin{equation} \label{eq::of}
\begin{aligned}
    x&=Fu+Gw, \\
    y&=Ju+Lw+v,
\end{aligned}
\end{equation}
where $F\in{\mathbb R}^{Nn\times Nm}$, $G\in{\mathbb R}^{Nn\times Nn}$, $J\in{\mathbb R}^{Np\times Nm}$, and $L\in{\mathbb R}^{Np\times Nn}$ are strictly causal time-invariant operators (i.e, strictly lower triangular block Toeplitz matrices) corresponding to the dynamics~\eqref{eq::ss}.

We consider the Linear-Quadratic Gaussian (LQG) cost given as
\begin{equation}\label{eq::LQRcost1}
    J(u,w,v)\coloneqq x^TQx+u^TRu
\end{equation}
where $Q, R\succ0$ are positive definite matrices of the appropriate dimensions. In order to simplify the notation, we redefine $x$ and $u$ as $x\leftarrow Q^{\frac{1}{2}}x$, and $u\leftarrow R^{\frac{1}{2}}u$, so that~\eqref{eq::LQRcost1} becomes
\begin{equation} \label{eq::LQRcost2}
    J(u,w,v)=\|x\|^2+\|u\|^2.
\end{equation}

\subsection{Controller Design}
We consider a linear controller that has only access to the measurements: 
\begin{equation}
    u=Ky, \quad K\in \mathcal{K},
\end{equation}
where $\mathcal{K}\subseteq \mathbb R^{Nm\times Np}$ is the space of causal (i.e., lower triangular) matrices.
Then, the closed-loop state measurement becomes
\begin{equation}
    y=(I-JK)^{-1}(Lw+v).
\end{equation}
As in \cite{ROMF}, let 
\begin{equation}\label{eq::E1}
    E=K(I-JK)^{-1},
\end{equation}
be the Youla parametrization, so that 
\begin{equation}\label{eq::controller}
    K=(I+EJ)^{-1}E.
\end{equation} The closed-loop LQG cost~\eqref{eq::LQRcost2} can then be written as:
\begin{equation}
    J(K,w,v)= 
\begin{bmatrix}
    w^T & v^T
\end{bmatrix} T_K^T T_K \begin{bmatrix}
    w \\
    v
\end{bmatrix},
\end{equation}
where $T_K$ is the transfer operator associated with $K$ that maps the disturbance sequences $\begin{bmatrix}
    w\\v
\end{bmatrix}$ to the state and control sequences $\begin{bmatrix}
    x\\u
\end{bmatrix}$:
\begin{equation}
   T_K\coloneqq \begin{bmatrix}FEL+G & FE \\ EL & E
   \end{bmatrix}. 
\end{equation}

\subsection{Regret-Optimal Control with Measurement-Feedback}\label{sec::ROMF}
Given a noncausal controller $K_0 \!\in\! \cal K$, we define the regret as:
\begin{align}\label{eq::regret}
    R(K,w,v)&\coloneqq J(K,w,v)- J(K_0,w,v),\\
    &= \begin{bmatrix}
    w^T & v^T
\end{bmatrix} (T_K^T T_K-T_{K_0}^T T_{K_0})\begin{bmatrix}
    w \\
    v\\
\end{bmatrix},
\end{align}
which measures the excess cost that a causal controller suffers by not knowing the future. In other terms, regret is the difference between the cost accumulated by a causal controller and the cost accumulated by a benchmark noncausal controller that knows the complete disturbance trajectory. The problem of minimizing regret in the measurement-feedback setting is referred to as (RO-MF) and is formulated as:
\begin{equation}\label{eq::ROMF}
\inf_{K\in \mathcal{K}} \sup_{\substack{ w,v}}\frac{R(K,w,v)}{\|w\|^2+ \|v\|^2},
\end{equation} 
which is solved suboptimally by reducing it to a level-1 suboptimal Nehari problem \cite{ROMF}.

\section{Distributionally Robust Regret-Optimal Control}\label{sec::pb}
In this section, we introduce the {\bf distributionally robust regret-optimal} (DR-RO) control problem {\bf with  measurement feedback}, which we refer to as {\bf DR-RO-MF}.

In this setting, the objective is to find a controller $K \!\in\! \mathcal{K}$ that minimizes the maximum expected regret among all joint probability distributions of the disturbances in an ambiguity set $\cal P$. This can be formulated formally as
\begin{equation}\label{eq::DROcost}
    \inf_{K\in\mathcal{K}}\sup_{P\in \mathcal{P}} \mathbb{E}_P [R(K,w,v)], 
\end{equation}
where the disturbances $\begin{bmatrix}
    w\\v
\end{bmatrix}$ are distributed according to $P\!\in\!\cal P$.

To solve this problem, we first need to characterize the ambiguity set $\mathcal{P}$ and explicitly determine a benchmark noncausal controller $K_0$.
As in \cite{DRORO}, we choose $\mathcal{P}$ to be the set of probability distributions that are at a distance of at most $r>0$ to a nominal probability distribution, $P_0\!\in\! {\mathcal{M}}({\mathbb{R}}^{N(n+p)})$. Here, the distance is chosen to be the type-2 Wasserstein distance defined as~\cite{wassOT}:
\begin{equation*}
    W_2^2(P_1,P_2):=\!\inf_{\pi\in\Pi(P_1,P_2)} \ \int_{\mathbb{R}^n\times\mathbb{R}^n} \|z_1\!-\!z_2 \|^2 \,\pi(dz_1,dz_2) ,
\end{equation*}
where the set $\Pi(P_1,P_2)$ comprises all joint distributions that have marginal distributions $P_1$ and $P_2$. Then, $\mathcal{P}$ can be written as: 
\begin{equation}
    \mathcal{P} := \{P \in \mathcal{M}(\mathbb{R}^{N(n+p)}) \,|\, W_2(P_0, P)\leq r\}.
\end{equation}

Unlike the full-information case, we know from Theorem 1 in \cite{ROMF} that in the measurement feedback case, there is no optimal noncausal controller that dominates every other controller for every disturbance. Therefore, we will choose $K_0$ as the optimal noncausal controller that minimizes the Frobenius norm of $T_K$. Theorem 3 in \cite{ROMF} shows that such a controller can be found as:
\begin{equation}
    K_0=(I+E_0J)^{-1} E_0,
\end{equation}
where the associated operator, $T_{K_0}$ is:
\begin{align}
    T_{K_0}=\begin{bmatrix}
        FE_0L+G&&FE_0\\ E_0L&&E_0
    \end{bmatrix},
    \end{align}
with 
    \begin{align}
          E_0 &\coloneqq -T^{-1}F^TGL^TU^{-1} \label{eq::E0}, \\
    T &\coloneqq I+F^TF \label{eq::T},\\
    U & \coloneqq I+LL^T\label{eq::U} . 
    \end{align}

    % \noindent {\bf Remark:} When $r=0$, the DR-RO control problem (\ref{eq::DROcost}) reduces to standard LQG control under the nominal distribution $P_0$. As discussed below, the solution to the DR-RO control problem reduces to standard regret-optimal control as $r\rightarrow\infty$. Thus, the choice of the radius $r$ allows one to interpolate between LQG and regret-optimal control. 

\section{Tractable Formulation}\label{sec::cvx}
In this section, we introduce a tractable reformulation of the DR-RO-MF control problem~\eqref{eq::DROcost}.

\subsection{DR-RO-MF Control Problem}
Defining 
 \begin{equation}
     \mathcal{C}_K\coloneqq T_K^T T_K-T_{K_0}^T T_{K_0},
 \end{equation}
we can rewrite the DR-RO-MF control problem~\eqref{eq::DROcost} as
\begin{equation}\label{eq::DROcost_v2}
     \inf_{K\in\mathcal{K}}\sup_{P\in \mathcal{P}} \mathbb{E}_P \left[ \begin{bmatrix}
    w^T & v^T
\end{bmatrix} \cal C_K\begin{bmatrix}
    w \\
    v\\
\end{bmatrix} \right].
\end{equation}
The following theorem gives the dual problem of inner maximization and characterizes the worst-case distribution.

\begin{thm}\label{thm::SDQO}
    [adapted from 
Theorems 2 and 3 in~\cite{DRORO}]. Suppose $P_0$ is absolutely continuous with respect to the Lebesgue measure on $\mathbb{R}^{N}$ and  $\begin{bmatrix} w_0 \\ v_0 \end{bmatrix}\sim P_0$. The optimization problem:
\begin{equation}
    \sup_{P\in\mathcal{P}} \mathbb{E}_P\left[ \begin{bmatrix}
    w^T & v^T
\end{bmatrix} \cal C_K\begin{bmatrix}
    w \\
    v\\
\end{bmatrix} \right]
\end{equation}
where $\begin{bmatrix} w \\ v \end{bmatrix}\sim P$ and $\mathcal{C}_K\in\mathbb{S}^{N(n+p)}$, with $\lambda_{max}(\mathcal{C}_K)\neq 0$, has a finite solution and is equivalent to the convex optimization problem: 
\begin{equation}
    \inf_{\substack{\gamma \geq 0,\\ \gamma I \succ \mathcal{C}_K}} \gamma (r^2-\operatorname{Tr}(M_0)) + \gamma^2 \operatorname{Tr}(M_0(\gamma I-\mathcal{C}_K)^{-1}),
    \label{eq:opgamma}
\end{equation}
where $M_0:=\mathbb{E}_{P_0}\left[\begin{bmatrix}
    w \\
    v
\end{bmatrix} \begin{bmatrix}
    w^T&v^T
\end{bmatrix}\right]$. Furthermore, the disturbance that achieves the worst-case regret is $\begin{bmatrix} w^\ast \\ v^\ast \end{bmatrix} \sim P^\ast$, where $\begin{bmatrix} w^\ast \\ v^\ast \end{bmatrix} = \gamma^\ast (\gamma^\ast I - \mathcal{{C}}_{K})^{-1} \begin{bmatrix} w_0 \\ v_0 \end{bmatrix}$, and $\gamma^\ast$ is the optimal solution of~\eqref{eq:opgamma}, which also satisfies the algebraic equation: 
\begin{equation}
    \operatorname{Tr}( (\gamma(\gamma I - \mathcal{C}_K)^{-1}-I)^{2}M_0)=r^2
\end{equation}
\end{thm}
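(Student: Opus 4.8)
The plan is to recognize the inner maximization as a textbook Wasserstein-2 distributionally robust optimization with a quadratic loss, apply the associated strong-duality theorem, and then make the generic dual fully explicit by solving the pointwise inner maximization in closed form; the attribution to \cite{DRORO} in the statement is the signal that the duality step itself is imported from there. First I would write $\zeta \coloneqq \begin{bmatrix} w \\ v \end{bmatrix}$, $\zeta_0 \coloneqq \begin{bmatrix} w_0 \\ v_0 \end{bmatrix}\sim P_0$, and $f(\zeta)\coloneqq \zeta^{\top}\mathcal C_K\zeta$, so the primal reads $\sup_{P:\,W_2(P_0,P)\le r}\mathbb E_P[f(\zeta)]$. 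Finiteness is immediate: by the $W_2$ triangle inequality applied to $P$, $P_0$ and the Dirac mass at the origin, every feasible $P$ obeys $\mathbb E_P[\|\zeta\|^2]^{1/2}\le\sqrt{\operatorname{Tr} M_0}+r$, whence $\mathbb E_P[f(\zeta)]\le\|\mathcal C_K\|\,(\sqrt{\operatorname{Tr} M_0}+r)^2<\infty$. Lagrangian duality for the squared-Wasserstein constraint, with multiplier $\gamma\ge0$, then gives
\begin{equation*}
\sup_{P:\,W_2(P_0,P)\le r}\mathbb E_P[f(\zeta)]=\inf_{\gamma\ge0}\ \left\{\gamma r^2+\mathbb E_{P_0}\!\left[\sup_{\zeta}\bigl(f(\zeta)-\gamma\|\zeta-\zeta_0\|^2\bigr)\right]\right\},
\end{equation*}
with no duality gap; absolute continuity of $P_0$ is what certifies that the optimal coupling is induced by a transport map and that the dual minimizer is attained.

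Next I would evaluate the inner supremum. Its argument equals $\zeta^{\top}(\mathcal C_K-\gamma I)\zeta+2\gamma\,\zeta_0^{\top}\zeta-\gamma\|\zeta_0\|^2$, which is bounded above in $\zeta$ only when $\gamma I\succeq\mathcal C_K$; moreover, if $\gamma>0$ and $\gamma I-\mathcal C_K$ is singular then $\zeta_0$ is a.s.\ not orthogonal to its kernel (again because $P_0\ll\mathrm{Leb}$), so $\mathbb E_{P_0}$ of the inner sup is $+\infty$. Hence the outer infimum may be taken over $\{\gamma\ge0:\gamma I\succ\mathcal C_K\}$, which is nonempty and on which the objective is genuine, the hypothesis $\lambda_{\max}(\mathcal C_K)\neq0$ serving to keep the eventual minimizer inside this set and away from the degenerate boundary. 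On that set the inner maximizer is the unique critical point $\zeta^{\ast}=\gamma(\gamma I-\mathcal C_K)^{-1}\zeta_0$, with value $\gamma^2\zeta_0^{\top}(\gamma I-\mathcal C_K)^{-1}\zeta_0-\gamma\|\zeta_0\|^2$. Taking $\mathbb E_{P_0}$, substituting $\mathbb E_{P_0}[\zeta_0\zeta_0^{\top}]=M_0$ and using linearity of the trace collapses the dual to
\begin{equation*}
\inf_{\gamma\ge0,\ \gamma I\succ\mathcal C_K}\ \gamma\bigl(r^2-\operatorname{Tr} M_0\bigr)+\gamma^2\operatorname{Tr}\!\bigl(M_0(\gamma I-\mathcal C_K)^{-1}\bigr),
\end{equation*}
which is exactly the claimed program; it is convex in $\gamma$, since diagonalizing $\mathcal C_K$ shows the objective is a linear term plus a nonnegative combination of the scalar convex maps $\gamma\mapsto\gamma^2/(\gamma-\lambda_i)$ over the eigenvalues $\lambda_i$ of $\mathcal C_K$.

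For the remaining two assertions: the transport map certifying the dual is the linear map $\zeta\mapsto\gamma^{\ast}(\gamma^{\ast}I-\mathcal C_K)^{-1}\zeta$, so the worst-case law $P^{\ast}$ is its pushforward of $P_0$, i.e.\ $\begin{bmatrix}w^{\ast}\\v^{\ast}\end{bmatrix}=\gamma^{\ast}(\gamma^{\ast}I-\mathcal C_K)^{-1}\begin{bmatrix}w_0\\v_0\end{bmatrix}$; and writing $\phi(\gamma)$ for the dual objective, the identity $\frac{d}{d\gamma}(\gamma I-\mathcal C_K)^{-1}=-(\gamma I-\mathcal C_K)^{-2}$ gives $\phi'(\gamma)=(r^2-\operatorname{Tr} M_0)+\operatorname{Tr}\bigl(M_0\bigl[2\gamma(\gamma I-\mathcal C_K)^{-1}-\gamma^2(\gamma I-\mathcal C_K)^{-2}\bigr]\bigr)$, so $\phi'(\gamma^{\ast})=0$ together with the completion of the square $I-2\gamma(\gamma I-\mathcal C_K)^{-1}+\gamma^2(\gamma I-\mathcal C_K)^{-2}=\bigl(\gamma(\gamma I-\mathcal C_K)^{-1}-I\bigr)^2$ yields $\operatorname{Tr}\bigl((\gamma^{\ast}(\gamma^{\ast}I-\mathcal C_K)^{-1}-I)^2M_0\bigr)=r^2$.

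The one genuinely non-routine ingredient is the strong duality and coupling-attainment of the first step: a quadratic loss grows exactly at the rate the $W_2$ ball \emph{barely} controls, so one must check that $\gamma$ stays bounded away from $\lambda_{\max}(\mathcal C_K)$ along minimizing sequences and that $P_0\ll\mathrm{Leb}$ upgrades an optimal coupling into a Monge transport map. These are precisely the facts established in Theorems~2 and~3 of \cite{DRORO}, which I would cite rather than reprove; everything else reduces to the closed-form computation sketched above.
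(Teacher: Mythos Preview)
Your sketch is correct and is precisely the standard Wasserstein-DRO duality argument that Theorems~2 and~3 of \cite{DRORO} encode; the paper itself does not spell out a proof but simply defers to that citation, so your write-up is in fact more explicit than what appears in the text while following the same route. The only minor remark is that your handling of the boundary case $\gamma I\succeq\mathcal C_K$ versus $\gamma I\succ\mathcal C_K$ (via absolute continuity of $P_0$ forcing $\zeta_0$ to a.s.\ hit the kernel) and the role of $\lambda_{\max}(\mathcal C_K)\neq0$ are exactly the technical points the cited theorems are there to cover, so your decision to invoke rather than reprove them is appropriate.
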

\begin{proof}
    The proof follows from Theorems 2 and 3 in \cite{DRORO} and is omitted for brevity here.
\end{proof}
    
We highlight two remarks pertaining to the presented theorem.

\noindent {\bf Remark 1:} Notice that the supremum of the quadratic cost depends on $P_0$ only though its covariance matrix $M_0$. Note further that as $r\rightarrow\infty$, the optimal $\gamma$ reaches its smallest possible value (since $r^2$ multiplies $\gamma$ in \eqref{eq:opgamma}). The smallest possible value that $\gamma$ can take is simply the operator norm of ${\cal C}_K$, which means that the DR-RO-MF controller approaches the regret-optimal controller as $r\rightarrow\infty$.

\noindent {\bf Remark 2:} Notice that the worst-case disturbance takes on a Gaussian distribution when the nominal disturbance is Gaussian. This is not immediately evident as the ambiguity set $\mathcal{P}$ contains non-Gaussian distributions. Note further that the worst-case disturbance is correlated even if the nominal distribution has white noise.

Assuming the covariance of the nominal distribution to be
\begin{equation}
M_0=\mathbb{E}_{P_0}\left[\begin{bmatrix}
    w \\
    v
\end{bmatrix} \begin{bmatrix}
    w^T&v^T
\end{bmatrix}\right]=I.\end{equation} 
so that $\operatorname{Tr}(M_0)=N(n+p)$, the optimization problem~\eqref{eq::DROcost_v2} can be cast equivalently using Theorem~\ref{thm::SDQO} as

\begin{equation}\label{newpb}
\begin{aligned}
&\inf_{K\in\mathcal{K}} \inf_{\substack{\gamma \geq 0}} \gamma (r^2-N(n+p)) + \gamma^2 \operatorname{Tr}((\gamma I - \mathcal{C}_K)^{-1})\\ 
&\text{s.t. } 
\begin{cases}
&\gamma I \succ \mathcal{C}_K\\
&\mathcal{C}_K=T_K^T T_K -T_{K_0}^T T_{K_0}\\
\end{cases}
\end{aligned}
\end{equation}
As in~\cite{ROMF}, define the unitary matrices $\Psi$ and $\Theta$: 
\begin{align}
    &\Theta=\begin{bmatrix}
        S^{-\frac{1}{2}}&&0\\0&& T^{-\frac{T}{2}}
    \end{bmatrix}\begin{bmatrix}
        I&&-F\\F^T&& I
    \end{bmatrix}\\
        &\Psi=\begin{bmatrix}
        I&&L^T\\-L&& I
    \end{bmatrix}\begin{bmatrix}
        V^{-\frac{1}{2}}&&-0\\0&& U^{-\frac{T}{2}}
    \end{bmatrix}
\end{align}
where $T$ and $U$ are as in~(\ref{eq::T}) and~(\ref{eq::U}), and
\begin{align}
    &S=I+FF^T\\
    &V=I+L^TL. \label{eq::V}
\end{align}
and $S^{\frac{1}{2}}$, $T^{\frac{1}{2}}$, $U^{\frac{1}{2}}$, and $V^{\frac{1}{2}}$ are (block) lower triangular matrices, such that $S=S^{\frac{1}{2}}S^{\frac{T}{2}}$, $T=T^{\frac{T}{2}}T^{\frac{1}{2}}$, $U=U^{\frac{1}{2}}U^{\frac{T}{2}}$, $V=V^{\frac{T}{2}}V^{\frac{1}{2}}$. Then, the optimization problem~\eqref{newpb} is equivalent to:
\begin{equation}\label{newpb2}
\begin{aligned}
&\inf_{\substack{K\in\mathcal{K}, \\ \gamma \geq 0,\\ \gamma I \succ \Bar{\mathcal{C}}_K}} \gamma (r^2-N(n+p)) + \gamma^2 \operatorname{Tr}((\gamma I - \Bar{\mathcal{C}}_K )^{-1}) \\
&\text{s.t. } 
\begin{cases}
& \Bar{\mathcal{C}}_K=(\Theta T_K \Psi)^T \Theta T_K \Psi-(\Theta T_{K_0} \Psi)^T \Theta T_{K_0} \Psi
\end{cases}
\end{aligned}
\end{equation}
which holds true since trace is invariant under unitary $\Theta$ and $\Psi$.
By introducing an auxiliary variable $X\succeq \gamma^2 (\gamma I - \Bar{\mathcal{C}}_K)^{-1}$ and leveraging the Schur complement theorem as in \cite{DRORO}, the problem \eqref{newpb2} can be recast as

\begin{equation}\label{eq::getX}
\begin{aligned}
&\inf_{\substack{K\in\mathcal{K},\\ \gamma \geq 0,\\ X \succeq 0}} \gamma (r^2-N(n+p)) + \operatorname{Tr}(X) \\
&\text{s.t. } 
\begin{cases}
&\begin{bmatrix}
    X & \gamma I \\
    \gamma I & \gamma I -  \Bar{\mathcal{C}}_K
\end{bmatrix} \succeq 0 \\
&\gamma I -  \Bar{\mathcal{C}}_K \succ 0\\
& \Bar{\mathcal{C}}_K\!=\!(\Theta T_K \Psi)^T \Theta T_K \Psi-(\Theta T_{K_0} \Psi)^T \Theta T_{K_0} \Psi
\end{cases}
\end{aligned}
\end{equation}

In the following lemma, we establish some of the important identities that are utilized to convert problem \eqref{eq::getX} to a tractable convex program.

\begin{lem}\label{thm::ROMF}
    [adapted from \cite{ROMF}]. The following statements hold:
    \begin{enumerate}
        \item  \textcolor{white}{.}
        \vspace{-5mm}
        \begin{align}\label{eq::gammaI}
       &\gamma I - \Bar{\mathcal{C}}_K
       =\begin{bmatrix}
           \gamma I && -PZ \\
           -Z^T P^T&&\gamma I -Z^TZ    
       \end{bmatrix}
    \end{align} 
    where
\begin{align}
    &Z =T^{\frac{1}{2}}EU^{\frac{1}{2}}-W \label{eq::Z}\\
    &W =-T^{-\frac{T}{2}}F^TGL^TU^{-\frac{T}{2}}\label{eq::W}\\
    &P =V^{-\frac{T}{2}}G^TFT^{-\frac{1}{2}} \label{eq::P}
\end{align}
 and $E$, $T$, $U$ and $V$ are as defined in~\ref{eq::E1},~\ref{eq::T}, ~\ref{eq::U} and~\ref{eq::V} respectively.

\item\textcolor{white}{.}
        \vspace{-5mm}
\begin{equation}\label{eq::nehari}
   \gamma I -  \Bar{\mathcal{C}}_K \succ 0 \Leftrightarrow \| Y - W_{-,\gamma}\|_{2} \leq 1
\end{equation}
where 
\begin{align}
&\gamma^{-1} I+ \gamma^{-2} P^TP= M_{\gamma}^T M_{\gamma}\\
 &M_{\gamma} = \left(\gamma^{-1} I+ \gamma^{-2} P^TP\right)^{\frac{1}{2}}\label{eq::Mgamma}\\
&W_{\gamma} =M_{\gamma}W\\
    &Y =M_{\gamma} T^{\frac{1}{2}} EU^{\frac{1}{2}} - W_{+,\gamma}
\end{align}
and $W_{+,\gamma}$ and $W_{-,\gamma}$ are the causal and strictly anticausal parts of $W_{\gamma}$. Here, $M_{\gamma}$ is lower triangular, and positive-definite.

\item $Y$ is causal iff $E$ is causal, where $E$ can be found as follows: 
\begin{equation}
    E=T^{-\frac{1}{2}}M_{\gamma}^{-1}(Y+W_{+,\gamma})U^{-\frac{1}{2}}\label{eq::Enew}
\end{equation}

\item The condition in~(\ref{eq::nehari}) is recognized as a level-1 suboptimal Nehari problem that approximates a strictly anticausal matrix $W_{-,\gamma}$ by a causal matrix $Y$.
\end{enumerate}
\end{lem}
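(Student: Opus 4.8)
The plan is to establish the four parts in order, the first being a direct but delicate matrix computation and the rest following by block linear algebra and causality bookkeeping.

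\emph{Part 1.} First I would evaluate $\Theta T_K\Psi$ in closed form. Multiplying $\begin{bmatrix} I & -F \\ F^T & I\end{bmatrix}$ into $T_K$ annihilates the $(1,2)$ block and gives $\begin{bmatrix} G & 0 \\ TEL+F^TG & TE\end{bmatrix}$; scaling the rows by $S^{-1/2}$ and $T^{-T/2}$ (using $T^{-T/2}T=T^{1/2}$) and then right-multiplying by $\Psi=\begin{bmatrix} V^{-1/2} & L^TU^{-T/2}\\ -LV^{-1/2} & U^{-T/2}\end{bmatrix}$ produces a block matrix whose $(1,1),(1,2),(2,1)$ blocks are $S^{-1/2}GV^{-1/2}$, $S^{-1/2}GL^TU^{-T/2}$, $T^{-T/2}F^TGV^{-1/2}$ — none depending on $K$ — and whose $(2,2)$ block, after using $LL^T+I=U$ and $UU^{-T/2}=U^{1/2}$, equals $T^{1/2}EU^{1/2}-W=Z$. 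The identical computation for $K_0$ gives the same first three blocks and a $(2,2)$ block $T^{1/2}E_0U^{1/2}-W$; plugging in $E_0=-T^{-1}F^TGL^TU^{-1}$ from~\eqref{eq::E0} and using $T^{1/2}T^{-1}=T^{-T/2}$ and $U^{-1}U^{1/2}=U^{-T/2}$ shows $T^{1/2}E_0U^{1/2}=W$, so that block is zero. Forming $\bar{\mathcal C}_K=(\Theta T_K\Psi)^T(\Theta T_K\Psi)-(\Theta T_{K_0}\Psi)^T(\Theta T_{K_0}\Psi)$, the three shared blocks make every term cancel except those containing $Z$, leaving $\bar{\mathcal C}_K=\begin{bmatrix} 0 & C^TZ\\ Z^TC & Z^TZ\end{bmatrix}$ with $C:=T^{-T/2}F^TGV^{-1/2}$; since $C^T=V^{-T/2}G^TFT^{-1/2}=P$, this is exactly~\eqref{eq::gammaI}.

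\emph{Part 2.} Because the $(1,1)$ block of $\bar{\mathcal C}_K$ is zero, positivity forces $\gamma>0$, and the Schur complement of $\gamma I-\bar{\mathcal C}_K$ with respect to its $(1,1)$ block $\gamma I$ shows $\gamma I-\bar{\mathcal C}_K\succ0$ iff $\gamma I-Z^TZ-\gamma^{-1}Z^TP^TPZ\succ0$, i.e. $\gamma I\succ Z^T(I+\gamma^{-1}P^TP)Z$. Writing $I+\gamma^{-1}P^TP=\gamma M_\gamma^TM_\gamma$ from the definition~\eqref{eq::Mgamma} of $M_\gamma$, this becomes $I\succ(M_\gamma Z)^T(M_\gamma Z)$, i.e. $\|M_\gamma Z\|_2\le1$ (the strict/non-strict distinction being the usual one at the suboptimal level). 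Substituting $Z=T^{1/2}EU^{1/2}-W$ and decomposing $M_\gamma W=W_\gamma=W_{+,\gamma}+W_{-,\gamma}$ into causal and strictly anticausal parts yields $M_\gamma Z=(M_\gamma T^{1/2}EU^{1/2}-W_{+,\gamma})-W_{-,\gamma}=Y-W_{-,\gamma}$, which is~\eqref{eq::nehari}.

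\emph{Parts 3 and 4.} Since $M_\gamma$, $T^{1/2}$, $U^{1/2}$ are lower triangular and invertible and $W_{+,\gamma}$ is causal, $Y=M_\gamma T^{1/2}EU^{1/2}-W_{+,\gamma}$ is causal iff $M_\gamma T^{1/2}EU^{1/2}$ is, which, pre- and post-multiplying by the causal inverses $T^{-1/2}M_\gamma^{-1}$ and $U^{-1/2}$, holds iff $E$ is causal; solving $Y+W_{+,\gamma}=M_\gamma T^{1/2}EU^{1/2}$ gives~\eqref{eq::Enew}. Part~4 is then just the observation that~\eqref{eq::nehari}, with $Y$ ranging over causal operators and $W_{-,\gamma}$ a fixed strictly anticausal operator, is by definition a level-$1$ suboptimal Nehari problem. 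I expect the only real obstacle to be Part~1 — carrying out the $\Theta T_K\Psi$ product without error and recognizing the two structural collapses, namely that the whole $K$-dependence funnels into the single block $Z$ and that the benchmark contributes $Z_0=0$; once those are in hand the rest is routine block-matrix algebra and triangularity arguments.
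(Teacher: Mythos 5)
Your derivation is correct: the block computation of $\Theta T_K\Psi$, the collapse of the benchmark term via $T^{1/2}E_0U^{1/2}=W$, the Schur-complement reduction to $\|M_\gamma Z\|_2<1$, and the causality bookkeeping all check out, and this is precisely the argument the paper omits by deferring to Theorem 4 of \cite{ROMF}. The only discrepancy you correctly flag is the strict-versus-nonstrict inequality in~\eqref{eq::nehari}, which is an imprecision in the lemma's statement rather than in your proof.
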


\begin{proof}
    The proof follows from Theorem 4 in \cite{ROMF} and is omitted for brevity here.
\end{proof}

Using Lemma~\ref{thm::ROMF}, problem~\eqref{eq::getX} can be reformulated as a tractable optimization program:
%\small{
\begin{equation}
    \begin{aligned}
    &\inf_{\substack{Z,Y\in\mathcal{K},\\ \gamma \geq 0,\\ X \succeq 0}} \gamma (r^2-N(n+p)) + \operatorname{Tr}(X) \\
    &\text{s.t. } 
    \begin{cases}
    &\begin{bmatrix}
        X_{11} &X_{12} & \gamma I & 0\\
          X_{12}^T &X_{22} & 0& \gamma I \\
        \gamma I & 0 &\gamma I & -PZ \\         
        0 & \gamma I &  -Z^T P^T&\gamma I -Z^TZ 
    \end{bmatrix}\succeq 0 \\
    &\| Y - W_{-,\gamma}\|_{2} \leq 1          
    \end{cases}
\end{aligned}%}
\end{equation}

\begin{equation}
\begin{aligned}
    &=\inf_{\substack{Z,Y\in\mathcal{K},\\ \gamma \geq 0,\\ X \succeq 0}} \gamma (r^2-N(n+p)) + \operatorname{Tr}(X) \\
    &\text{s.t. } 
    \begin{cases}
    &\begin{bmatrix}
        X_{11} &X_{12} & \gamma I & 0 & 0\\
          X_{12}^T &X_{22} & 0& \gamma I & 0 \\
        \gamma I & 0 &\gamma I & -PZ & 0 \\         
        0 & \gamma I &  -Z^T P^T&\gamma I & Z^T\\
        0&0&0&Z&I
    \end{bmatrix}\succeq 0 \label{case::1}\\
    &\| Y - W_{-,\gamma}\|_{2} \leq 1   
    \end{cases}
\end{aligned}
\end{equation}
where the last step follows from the Schur complement. Using~\eqref{eq::Z},~\eqref{eq::Enew}, and 
\begin{equation}
    H_\gamma=M_\gamma^{-1}W_{+,\gamma}-W
\end{equation}
we establish our main theorem.
\begin{thm}[Tractable Formulation of DR-RO-MF]
The distributionally robust regret-optimal control problem in the measurement feedback setting~\eqref{eq::DROcost} reads:
\textcolor{white}{ }
%\begin{small}
\begin{equation}\begin{aligned}\label{eq::finalpb}
&\inf_{\substack{Y\in\mathcal{K},\\ \gamma \geq 0,\\ X \succeq 0}} \gamma (r^2-N(n+p)) + \operatorname{Tr}(X) \\
    &\text{s.t. } 
    \begin{cases}
    &\begin{bmatrix}
        X_{11} &X_{12} \!&\! \gamma I & 0 & 0\\
          X_{12}^T &X_{22} \!&\! 0& \gamma I & 0 \\
        \gamma I & 0 \!&\!\gamma I & -P(*) & 0 \\         
        0 & \gamma I \!&\!  -(*)^T P^T&\gamma I & (*)^T\\
        0&0&0&(*)&I
    \end{bmatrix}\!\succeq\! 0 \\
     &(*)=M_\gamma^{-1}Y+H_\gamma\\
    &\begin{bmatrix}
        I& (Y - W_{-,\gamma})^T\\
        Y - W_{-,\gamma}& I
    \end{bmatrix}\succ 0
    \end{cases}
\end{aligned}\end{equation}
The optimal controller $K^\ast$ is then obtained using~\eqref{eq::controller} and~\eqref{eq::Enew}.
\end{thm}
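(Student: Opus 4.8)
The plan is to assemble the claimed SDP by composing the exact reformulations already set up in the paper, so the proof is in essence a bookkeeping argument that tracks \emph{equivalences} (not relaxations) through a chain of unitary congruences and Schur complements. I would start from the DR-RO-MF problem \eqref{eq::DROcost_v2}, apply Theorem~\ref{thm::SDQO} to the inner supremum with $M_0=I$, and obtain the joint minimization \eqref{newpb} over $K\in\mathcal{K}$ and $\gamma\ge 0$ subject to $\gamma I\succ\mathcal{C}_K$. Inserting the unitary operators $\Theta$ and $\Psi$ gives $\bar{\mathcal{C}}_K=(\Theta T_K\Psi)^T\Theta T_K\Psi-(\Theta T_{K_0}\Psi)^T\Theta T_{K_0}\Psi=\Psi^T\mathcal{C}_K\Psi$ by $\Theta^T\Theta=I$; since $\Psi$ is unitary this is a congruence, so $\gamma I-\bar{\mathcal{C}}_K=\Psi^T(\gamma I-\mathcal{C}_K)\Psi$ and $\operatorname{Tr}((\gamma I-\bar{\mathcal{C}}_K)^{-1})=\operatorname{Tr}((\gamma I-\mathcal{C}_K)^{-1})$, turning \eqref{newpb} into \eqref{newpb2} with unchanged optimal value. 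Introducing $X\succeq\gamma^2(\gamma I-\bar{\mathcal{C}}_K)^{-1}$ and linearizing by the Schur complement — lossless because $\operatorname{Tr}(X)$ is monotone in $X$, so the constraint is tight at the optimum — yields \eqref{eq::getX}.

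The core of the argument is then the structural description of $\gamma I-\bar{\mathcal{C}}_K$ from Lemma~\ref{thm::ROMF}. Using part~1, I would substitute the block form \eqref{eq::gammaI} into the LMI of \eqref{eq::getX}; expanding each $X$ and $\gamma I$ into $2\times 2$ blocks produces the $4\times 4$ block LMI, and a further Schur complement against a fresh $I$-block pulls the quadratic term $Z^TZ$ out of the $(4,4)$ entry, giving the $5\times 5$ LMI \eqref{case::1}. In parallel, the strict condition $\gamma I-\bar{\mathcal{C}}_K\succ 0$ is replaced, via part~2 of Lemma~\ref{thm::ROMF}, by the Nehari-type bound $\|Y-W_{-,\gamma}\|_2\le 1$, and by part~3 the map $E\mapsto Y$ through \eqref{eq::Enew} is a causality-preserving bijection, so optimizing over $K\in\mathcal{K}$ is the same as optimizing over $Y\in\mathcal{K}$.

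To reach \eqref{eq::finalpb} I would eliminate $Z$ in favour of $Y$: combining \eqref{eq::Z}, \eqref{eq::Enew} and $H_\gamma=M_\gamma^{-1}W_{+,\gamma}-W$ gives $Z=T^{1/2}EU^{1/2}-W=M_\gamma^{-1}(Y+W_{+,\gamma})-W=M_\gamma^{-1}Y+H_\gamma$, which is exactly the abbreviation $(*)$ in \eqref{eq::finalpb}; substituting it into \eqref{case::1} removes $Z$ as an independent variable. Finally I would rewrite $\|Y-W_{-,\gamma}\|_2\le 1$ as the LMI $\begin{bmatrix} I & (Y-W_{-,\gamma})^T \\ Y-W_{-,\gamma} & I\end{bmatrix}\succ 0$ (the standard Schur-complement characterization of a spectral-norm ball), and recover the optimal controller from a minimizing $Y^\ast$ via \eqref{eq::Enew} followed by \eqref{eq::controller}.

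The main obstacle — and the step I would be most careful about — is the change of variables together with the causality bookkeeping: one must check that $Z=M_\gamma^{-1}Y+H_\gamma$ is consistent with the $Z$--$E$ relation used inside Lemma~\ref{thm::ROMF}, that $Y$ ranging over the lower-triangular matrices parametrizes \emph{exactly} the admissible controllers (no feasible $K$ lost, none infeasible gained), and that the Nehari bound together with the block LMI is genuinely equivalent to, not merely sufficient for, feasibility of \eqref{eq::getX}. The attendant strict-versus-non-strict distinctions in the definiteness constraints also need consistent handling, since it is $\lambda_{\max}(\mathcal{C}_K)\neq 0$ and $\gamma I\succ\mathcal{C}_K$ from Theorem~\ref{thm::SDQO} that keep all the inverses ($M_\gamma^{-1}$, $(\gamma I-\bar{\mathcal{C}}_K)^{-1}$) well defined throughout.
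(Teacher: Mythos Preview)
Your proposal is correct and follows essentially the same route as the paper: the paper's proof is precisely the chain of reformulations displayed before the theorem---Theorem~\ref{thm::SDQO} with $M_0=I$ to get \eqref{newpb}, the unitary congruence to \eqref{newpb2}, the Schur lift to \eqref{eq::getX}, Lemma~\ref{thm::ROMF} for the block structure and the Nehari rewrite, the second Schur complement yielding \eqref{case::1}, and the substitution $Z=M_\gamma^{-1}Y+H_\gamma$. Your added care about the causality bijection $E\leftrightarrow Y$ and the strict/non-strict definiteness bookkeeping is more explicit than what the paper records, but the argument is the same.
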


\subsection{Sub-Optimal Problem}\label{subsec::subopt}
For a given value of $\gamma$, problem~(\ref{eq::finalpb}) can be simplified into a tractable SDP. In practical implementations, we can solve problem~(\ref{eq::finalpb}) by optimizing the objective function with respect to the variables $Y$ and $X$ while fixing $\gamma$, thus transforming the problem into an SDP, which can be solved using standard convex optimization packages. We then iteratively refine the value of $\gamma$ until it converges to the optimal solution $\gamma^*$. This iterative process ensures that we obtain the best possible value for $\gamma$ that minimizes the objective function in problem~(\ref{eq::finalpb}).

%\subsection{Worst-Case Disturbance}\label{subsec::dist}
%The following theorem characterizes the worst-case disturbance that achieves the worst regret given the optimal controller and optimal 
%\begin{thm}\label{nthg}
 %   [adapted from Theorem 3 in~\cite{DRORO}]. With $\begin{bmatrix} w \\ v \end{bmatrix}\sim P_0$, the disturbance that achieves the worst-case regret is: $\begin{bmatrix} w^\ast \\ v^\ast \end{bmatrix} \sim P^\ast$, where $\begin{bmatrix} w^\ast \\ v^\ast \end{bmatrix} = \gamma^\ast (\gamma^\ast I - \mathcal{\Bar{C}}_{K^\ast})^{-1} \begin{bmatrix} w \\ v \end{bmatrix}$, and $\gamma^\ast$ is the optimal $\gamma$, and $K^\ast$ the optimal controller.
%\end{thm}

\subsection{\texorpdfstring{LQG and RO-MF Control Problems as Special Cases}{LQG and RO-MF Control Problems as Special Cases}}\label{sub::special}

Interestingly, LQG and RO control in the measurement feedback setting can be recovered from the DR-RO-MF control by varying the radius $r$ which represents the extent of uncertainty regarding the accuracy of the nominal distribution in the ambiguity set. When $r\rightarrow 0$, the ambiguity set transforms into a singular set comprising solely the nominal distribution. Consequently, the problem simplifies into a stochastic optimal control problem under partial observability:
\begin{equation}
\inf_{K\in \mathcal{K}} \mathbb{E}_{P_0} [J(K,w,v)]
\end{equation}
As $r\rightarrow \infty$,  the ambiguity set transforms into the set of any disturbance generated adversarially and the optimal $\gamma$ reaches its smallest possible value which is the operator norm of ${\cal C}_K$. This means that the problem reduces to the RO-MF control problem which we discussed in section~\ref{sec::ROMF}.

\section{Simulations}\label{sec::exp}
\subsection{Flight Control}\label{subsec::flight}
We focus on the problem of controlling the longitudinal flight of a Boeing 747 which pertains to the linearized dynamics of the aircraft, as presented in~\cite{boeing}. The linear dynamical system provided describes the aircraft's dynamics during level flight at an altitude of 7.57 miles and a speed of 593 miles per hour, with a discretization interval of 0.1 second. The state variables of the system encompass the aircraft's velocity along the body axis, velocity perpendicular to the body axis, angle between the body axis and the horizontal plane, and angular velocity. The inputs to the system are the elevator angle and thrust. The process noise accounts for variations caused by external wind conditions.
The discrete-time state space model is:
\begin{align*}
    &A= \begin{bmatrix}
      0.9801& 0.0003& -0.0980& 0.0038\\
           -0.3868& 0.9071& 0.0471& -0.0008\\
           0.1591& -0.0015& 0.9691& 0.0003\\
           -0.0198& 0.0958& 0.0021& 1.000
\end{bmatrix}\\
       &B= \begin{bmatrix}
           -0.0001&0.0058\\
            0.0296& 0.0153\\
            0.0012& -0.0908\\
            0.0015& 0.0008
       \end{bmatrix},
        C=\begin{bmatrix}
            1&0&0&0\\
            0&0&0&1
        \end{bmatrix}.
\end{align*}
We conduct all experiments using MATLAB, on a PC with an Intel Core i7-1065G7 processor and 16 GB of RAM. The optimization problems are solved using the CVX package~\cite{cvx}.

We limit the horizon to $N=10$. We take the nominal distribution $P_0$ to be Gaussian with mean $\mu_0=0$ and covariance $\Sigma_0=I$, and we investigate various values for the radius $r$, specifically:
\begin{equation*}
    r\in \{0, 0.2, 0.4, 0.6, 0.8, 1, 1.5, 2, 4, 8, 16, 32, 126\}.
\end{equation*}

For each value of $r$, we solve the sub-optimal problem described in section~\ref{subsec::subopt}, iterating over $\gamma$ until convergence to $\gamma^*$.

\begin{figure}[!ht]
    \centering    
    \includegraphics[width=0.52\textwidth]{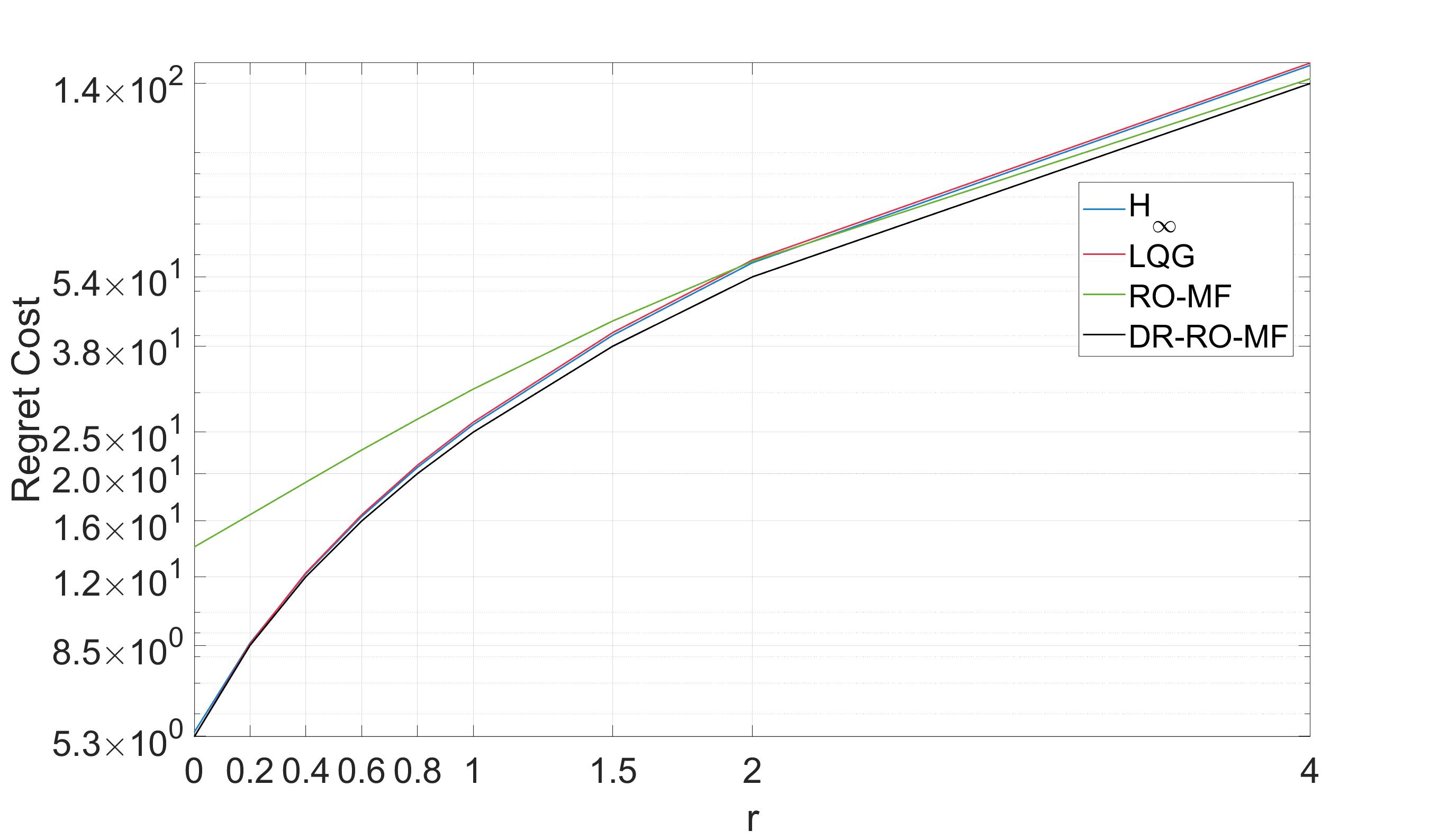}
    \caption{Controller costs for $r\in {0, 0.2, 0.4, 0.6, 0.8, 1, 1.5, 2, 4}$.\\
    At $r=0$, the top-performing controllers are DR-RO-MF and LQG, exhibiting regret costs of 5.4. They are followed by $\text{H}_\infty$ with a regret cost of 5.9, and finally RO-MF with a regret cost of 13.8. The ranking of the controllers based on regret costs is: \textbf{DR-RO-MF}=$\textbf{LQG}$=5.34 $<$ $\textbf{H}_\infty$=5.47 $<$ \textbf{RO-MF}=13.8. \\
    As $r$ increases to 4, DR-RO-MF remains the best-performing controller with a regret of 141. It is followed by RO-MF with a regret of 144, $\text{H}_\infty$ with a regret of 154, and finally $\text{H}2$ with a regret of 156. The ranking of the controllers at $r=4$ based on regret costs is: \textbf{DR-RO-MF}=141 $<$ \textbf{RO-MF}=144 $<$ $\textbf{H}_\infty$=154 $<$ \textbf{LQG}=156.}
    \label{fig:small_r}
\end{figure}

\begin{figure}[!ht]
    \centering
    \includegraphics[width=0.52\textwidth]{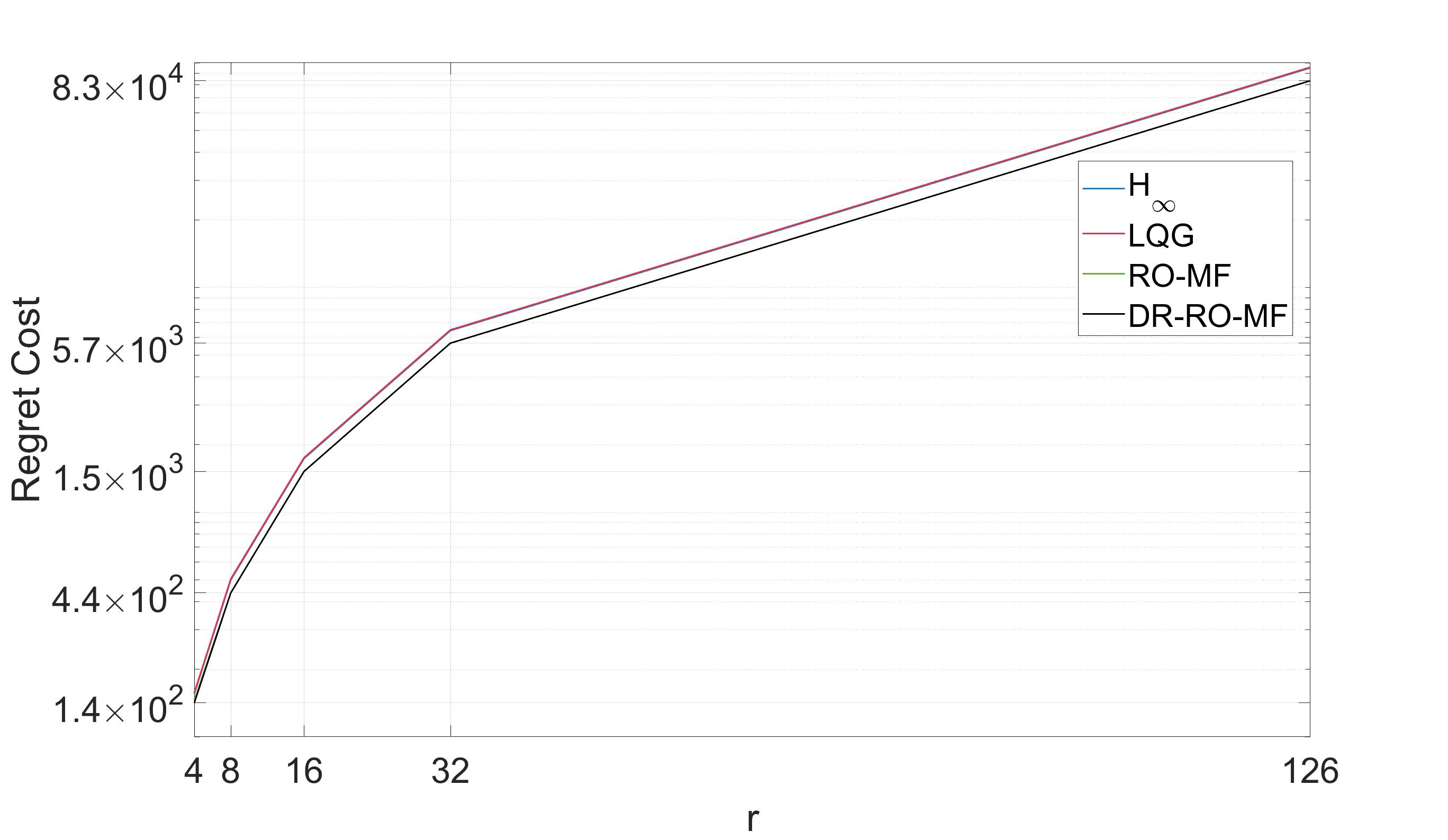}
    \caption{Controller costs for $r\in {4, 8, 16, 32, 126}$.\\
At $r=8$, the best-performing controller is DR-RO-MF with a regret of 437, which is closely comparable to the regret of the RO-MF controller of 438. They are followed by $\text{H}_\infty$ with a regret of 499, and finally $\text{H}2$ with a regret of 505. The ranking of controllers based on regret costs is as follows: \textbf{DR-RO-MF}=437 $\lesssim$ \textbf{RO-MF}=438 $<$ $\textbf{H}_\infty$=499 $<$ \textbf{LQG}=505.\\
When $r$ increases to 126, which approximates the behavior of $r$ approaching infinity, the order of the best-performing controllers remains unchanged: \textbf{DR-RO-MF}=\textbf{RO-MF}=$8.33 \times 10^4$ $<$ $\textbf{H}_\infty$=$9.50 \times 10^4$ $<$ \textbf{LQG}=$9.57 \times 10^4$. DR-RO-MF and RO-MF controllers exhibit similar performance in this regime.
    }
    \label{fig:big_r}
\end{figure}

To assess the performance of the controller, we compute the worst-case disturbance, which lies at a Wasserstein distance $r$ from $P_0$, as discussed in theorem~\ref{thm::SDQO}. Finally, we compare the regret cost of the DR-RO-MF controller with that of the LQG, $H_\infty$~\cite{blackbook}, and RO-MF~\cite{ROMF} controllers while considering the worst-case disturbance corresponding to the DR-RO-MF controller. The results are shown in Figures~\ref{fig:small_r} and \ref{fig:big_r}.

The DR-RO-MF controller achieves the minimum cost under worst-case disturbance conditions for any given value of $r$. When $r$ is sufficiently small (less than 0.2), the cost of the DR-RO-MF controller closely approximates that of the LQG controller (figure~\ref{fig:small_r}). Conversely, for sufficiently large values of $r$ (greater than 8), the cost of the DR-RO-MF controller closely matches that of the RO-MF controller (figure~\ref{fig:big_r}). These observations align with theoretical findings as elaborated in section~\ref{sub::special}.

Furthermore, it is worth noting that for large values of $r$ (figure~\ref{fig:big_r}), the LQG controller yields the poorest results. Conversely, for small values of $r$ (figure~\ref{fig:small_r}), the LQG controller performs on par with the DR-RO-MF controller, emerging as the best choice, as mentioned earlier. This discrepancy is expected since LQG control accounts only for disturbances drawn from the nominal distribution, assuming uncorrelated noise. On the other hand, RO-MF exhibits inferior performance when $r$ is small (figure~\ref{fig:small_r}), but gradually becomes the top-performing controller alongside DR-RO-MF as $r$ increases. This behavior arises from the fact that RO-MF is specifically designed for sufficiently large $r$. Lastly, note that the $H_\infty$ cost lies between the costs of the other controllers, interpolating their respective costs.

\subsection{Performance Under Adversarially Chosen Distribution}
For any given causal controller $K_{c}$, an adversary can choose the worst-case distribution of disturbances for a fixed $r$ as
\begin{align}
\arg \max_{P\in \mathcal{P}} \mathbb{E}_{ P} R(K_{c},w,v) \eqqcolon P_{c}, \label{eq:adversarial}
\end{align}
where $R$ is the regret as in \eqref{eq::regret}. Denoting by $K_{\text{DR-RO-MF}}$ the optimal DR-RO-MF controller and by $P_{\text{DR-RO-MF}}$ the worst-case (adversarial) distribution corresponding to $K_{\text{DR-RO-MF}}$, we have that%we can compare the average regret incurred by the controller $K_{c}$ under its corresponding worst case distribution 
\begin{align}
     \mathbb{E}_{ P_c} R(K_{c},w,v) &= \max_{P\in\mathcal P}\mathbb{E}_{P} R(K_{c},w,v), \\
     &\geq \min_{K\in \mathcal K}\max_{P\in\mathcal P}\mathbb{E}_{P} R(K,w,v), \\
     &=  \mathbb{E}_{ P_{\text{DR-RO-MF}}} R(K_{\text{DR-RO-MF}},w,v),  \\
     &\geq \mathbb{E}_{ P_{c}} R(K_{\text{DR-RO-MF}},w,v), \label{eq:theoretical_exp}
\end{align}
where the first equality follows from \eqref{eq:adversarial} and the last inequality is due to the fact that $P_{\text{DR-RO-MF}}$ is the worst-case distribution for $K_{\text{DR-RO-MF}}$.  In other words, DR-RO-MF controller is robust to adversarial changes in distribution as it yields smaller expected regret compared to any other causal controller $K_c$ when the disturbances are sampled from the worst-case distribution $P_c$ corresponding to $K_c$.

The simulation results presented in Subsection \ref{subsec::flight} show that DR-RO-MF outperforms  RO-MF, $H_\infty$, and LQG (designed assuming disturbances are sampled from $P_0$) controllers under the worst-case distribution of the DR-RO-MF controller $P_{\text{DR-RO-MF}}$, i.e
\begin{align}
     \mathbb{E}_{ P_\text{DR-RO-MF}} R(K_{c},w,v) \geq \mathbb{E}_{ P_{\text{DR-RO-MF}}} R(K_{\text{DR-RO-MF}},w,v).
\end{align}
%it directly implies that the theoretical expectations in \eqref{eq:theoretical_exp} are validated and positively exceeded following the inequalities
This directly implies that the theoretically expected inequality
\begin{align}
    \mathbb{E}_{ P_c} R(K_{c},w,v) \geq \mathbb{E}_{ P_{c}} R(K_{\text{DR-RO-MF}},w,v) \label{eq:gap}
\end{align}
is validated and positively exceeded following the inequalities \eqref{eq:theoretical_exp} and
\begin{align}
     %&\mathbb{E}_{ P_\text{DR-RO-MF}} R(K_{\text{DR-RO-MF}},w,v) \geq \mathbb{E}_{ P_c} R(K_{\text{DR-RO-MF}},w,v)\\
     &\mathbb{E}_{ P_c} R(K_{c},w,v) \geq \mathbb{E}_{ P_{\text{DR-RO-MF}}} R(K_{c},w,v).
\end{align}
%for any causal controller $K_c$ and the corresponding worst-case distribution $P_c$.

To further support our claims, we assess the performance of LQG and RO-MF controllers by measuring the relative reduction in expected regret when DR-RO-MF controller is utilized under the worst-case distributions corresponding to LQG and RO-MF controllers, respectively:
 \begin{equation}\label{eq::perc}
     \frac{\mathbb{E}_{P_c} R(K_{c},w,v) - \mathbb{E}_ {P_c} R(K_{\text{DR-RO-MF}},w,v)}{ \mathbb{E}_{P_c} R(K_{c},w,v)} \times 100,
 \end{equation} 
 where $K_c$ is either LQG or RO-MF controller and $P_c$ is the corresponding worst-case distribution. The results are shown in Table~\ref{tab:my_label} for $r \in \{0.2,1,2,4,16,32\}$.

 \begin{table}[]
     \centering
     \begin{tabular}{|c||c|c|c|c|c|c|}
    \hline
    $\mathbf{r}$ & \textbf{0.2} & \textbf{1} & \textbf{2} & \textbf{4}  & \textbf{16} & \textbf{32} \\
    \hline\hline
    \textbf{LQG}(\%) & 0.860 & 8.17 & 14.8 &21.9& 28.5 & 29.3 \\
    \hline
    \textbf{RO-MF}(\%) & 56.6 & 43.0 & 32.3 & 17.2 & 1.95 & 0.465 \\
    \hline
    \end{tabular}
    \vspace{1em} 
     \caption{Relative difference in \% (as in \eqref{eq::perc}) between the expected regret of LQG/RO-MF and of DR-RO-MF controllers, under the worst-case disturbance of LQG/RO-MF, respectively, as in \eqref{eq:adversarial} for different values of $r$} 
     \label{tab:my_label}
 \end{table}

\subsection{Limitations}
In our scenario with a relatively short planning horizon of $N=10$, the cost reduction achieved by employing DR-RO-MF control, in comparison to traditional controllers such as LQG and $H_\infty$, is moderate. However, it is anticipated that this reduction would become more pronounced with the utilization of a longer planning horizon. Unfortunately, in our experimental setup, we were restricted to using $N=10$ due to computational limitations. Solving semi-definite programs involving large matrices is computationally inefficient, necessitating this constraint. In practice, this limitation can be overcome by implementing the controller in a receding horizon fashion, where the controller is updated every $x$ time steps.

\section{Conclusion}
In conclusion, this paper extended the distributionally robust approach to regret-optimal control by incorporating the Wasserstein-2 distance~\cite{DRORO} to handle cases of limited observability. The proposed DR-RO-MF controller demonstrated superior performance compared to classical controllers such as LQG and $H_\infty$, as well as the RO-MF controller, in simulations of flight control scenarios. The controller exhibits a unique interpolation behavior between LQG and RO-MF, determined by the radius $r$ that quantifies the uncertainty in the accuracy of the nominal distribution. As the time horizon increases, solving the tractable SDP to which the solution reduces, becomes more challenging, highlighting the practical need for a model predictive control approach. Overall, the extended distributionally robust approach presented in this paper holds promise for robust and effective control in systems with limited observability.

\bibliographystyle{./bibliography/IEEEtran}
\bibliography{./bibliography/IEEEabrv,./bibliography/IEEEexample}

\vspace{12pt}

\end{document}